\documentclass[A4paper,10pt]{article}
\usepackage{amsmath,amssymb,mathrsfs,pictex,graphicx}
\date{}

\def\scstyle{\scriptstyle}
\renewcommand{\mathcal}{\mathscr}
\def\cl#1{{\mathscr #1}}
\newcommand{\dlines}{\displaylines}
\voffset-.3in \hoffset-.5in \textwidth6in \textheight8.3in

\def\ep{\varepsilon}

\def\paref#1{(\ref{#1})}

\newcommand{\field}[1]{\mathbb{#1}}
\newcommand{\R}{\field{R}}
\newcommand{\finedim}{\par\hfill$\blacksquare$\par\noindent\ignorespaces}

\newcommand{\RR}{\field{R}}

\newcommand{\bH}{\field{H}}

\let\pa=\partial
\def\acosh{\mathop{\rm acosh}}

\newcommand{\e}{{\rm e}}
\newcommand{\F}{{\mathscr{F}}}

\def\cl#1{{\mathcal #1}}

\def\E{{\rm E}}

\def\P{{\rm P}}

\def\bH{{\field H}}

\def\tfrac#1#2{{\textstyle\frac {#1}{#2}}}
\newtheorem{theorem}{Theorem}
\newtheorem{rema}[theorem]{Remark}
\numberwithin{equation}{section}

\newtheorem{assum}{Assumption}

\newtheorem{lemma}[theorem]{Lemma}
\newtheorem{lem}[theorem]{Lemma}
\newtheorem{prop}[theorem]{Proposition}

\begin{document}
\title{\Huge\sc Large Deviation asymptotics for the exit from a
domain of the bridge of a general Diffusion}

\author{Paolo Baldi, Lucia Caramellino, Maurizia Rossi\\
{\it Dipartimento di Matematica, Universit\`a
di Roma Tor Vergata, Italy}\\
} \date{}\maketitle

\begin{abstract}
We provide Large Deviation estimates for the  bridge of a
$d$-dimensional general diffusion process as the conditioning time tends to $0$ and apply these results to the evaluation of the asymptotics of its exit time probabilities. We are motivated by applications to numerical simulation, especially in connection with stochastic volatility models.
\end{abstract}

\noindent{\it AMS 2000 subject classification:} 60F10, 60J60
\smallskip

\noindent{\it Key words and phrases:} Large Deviations, conditioned Diffusions, exit times, stochastic volatility.

\section{Introduction}\label{intro}
In this paper we give Large Deviation (LD) estimates for the probability that
a $d$-dimensional pinned diffusion process
exits from a domain containing its endpoints, as the conditioning time goes to $0$.

This investigation is motivated by applications to numerical simulation in presence of a boundary, as explained in \cite{Bal:95} and \cite{MR1849251}, where a correction technique is developed requiring at each iteration of the Euler scheme to check whether the conditioned diffusion touches the boundary.

In the most common case, where the exit from a domain is actually the crossing of a barrier, people have resorted so far to the approximation of the conditioned diffusion with the one that has its coefficients frozen at one of the endpoints, so that the computation is reduced to exit probabilities for the Brownian bridge that are well known, also for curved boundaries (see \cite{lerche} and \cite{Bal:95}). In \cite{gobet} this technique is well developed producing also error estimates.

In recent years however the application to stochastic volatility models of the freezing approximation has prompted to further the investigation in order to obtain precise estimates and also to check the amount of error produced by the method of freezing.
As remarked in \cite{Bal:95} the exact computation of this probability is, in general, very difficult and it is suggested that as the time step in simulation is in general very small, one could be sufficiently happy with the equivalent of this probability as the conditioning time goes to $0$. The natural tool are therefore LD's or, better, sharp LD's, as developed in \cite{Bal:95} and \cite {MR1849251}.

Our first step is a pathwise Large Deviation Principle (LDP) for the bridge of a general diffusion whose proof turns out to be particularly simple. Our results are in some sense weaker than the existing literature, but apply to a general class of diffusions. See in particular \cite{bailleul}, \cite{hsu1-MR1027823} and \cite{inahama}.  As already mentioned this LD result is not a complete LDP for the conditioned diffusion but it is sufficient in order to derive the estimates for the probability of exit from a domain that are our final goal.

Considering applications to stochastic volatility models we must mention that some of them do not fall into the domain of application of our result. It is the case for instance of the Heston model. In the last section, on the other hand, we show how to apply our estimates to the Hull and White model.

In \S1 we introduce notations and recall known results needed in the sequel.
In \S2, using the well known equivalence between LDP and Laplace Principle we prove a Large Deviation estimate for a large class of conditioned diffusion processes. These LD estimates are sufficient in order to derive in \S4 LD estimate for the probability of the conditioned diffusion to exit a given, possibly unbounded, open set $D$. The main tool in this section is an extension of Varadhan's lemma to possibly discontinuous functionals. In \S5 we give an application to the Hull-White stochastic volatility model, pointing out in particular that the method of freezing the coefficients can lead to significantly wrong appreciations. The computations turn out to be particularly simple in this case as the model is then closely related to the geometry of the hyperbolic half-plane $\bH$.

\section{Notations}
In this paper $\xi$ denotes a $d$-dimensional diffusion process satisfying the
Stochastic Differential Equation (SDE) on an open set $S\subset\R^m$
\begin{equation}\label{SDE for X}
\begin{array}{l}
d\xi_s=b(\xi_s)\, ds+\sigma(\xi_s)\, dB_s\ ,\cr
\xi_0 = x
\end{array}
\end{equation}
for some locally Lipschitz coefficients  $b : S\to \R^d$ and $\sigma: S\to \R^{d\times m}$,
$B$ being a $m$-dimensional Brownian motion. We assume that $\xi$ has a transition density,
denoted  by $p$ from now on.
\begin{assum}\label{assum1} We assume that the SDE \paref{SDE for X} has a.s. infinite lifetime, i.e. that $\inf\{t,\xi_t\not\in S\}=+\infty$.
\end{assum}
For $t>0$ we denote $\xi^t$ the rescaled process, i.e. $\xi^t_s = \xi_{st}$ for $s\in [0,1]$; it is immediate that $\xi^t$
 satisfies the SDE
\begin{equation}\label{SDE rescaled X}
\begin{array}{l}
d\xi^t_s=t b(\xi^t_s)\, ds+\sqrt{t}\,\sigma(\xi^t_s)\, dB_s\ ,\cr
\xi^t_0=x\ ,
\end{array}
\end{equation}
for a possibly different Brownian motion $B$. For $y\in S$, the pinned process conditioned by $\xi_t=y$ is the time inhomogeneous Markov process $\widehat \xi=\widehat \xi_{x,y}$ whose transition
density is, for $0\le u\le v\le t$ and  $z_1, z_2\in \R^d$,
\begin{equation}\label{widehat-p}
\widehat p(u,v,z_1,z_2)=\frac {p(v-u,z_1,z_2)p(t-v,z_2,y)}{p(t-u,z_1,y)}\ \cdotp
\end{equation}
Let now $\widehat \xi^{t}=\widehat \xi^t_{x,y}$ be the rescaled pinned process given by
\begin{equation}\label{rescaled pinned process}
\widehat \xi^t_s = \widehat \xi_{ts}\ ,\qquad s\in [0,1]\ ,
\end{equation}
whose transition function is of course
\begin{equation}\label{widehat-p}
\widehat p_t(u,v,z_1,z_2)=\frac {p(t(v-u),z_1,z_2)p(t(1-v),z_2,y)}{p(t(1-u),z_1,y)}\ \cdotp
\end{equation}
Let us denote $\cl C_x ( [0,1], \R^d)$ (or for brevity $\cl C_x$) the space of continuous functions $\gamma:[0,1]\to \R^d$ with starting point $\gamma_0=x$. We denote $X_s$ the canonical application $X_s:\cl C_x\to\RR^d$ defined as $X_s(\gamma)=\gamma(s)$ and by $(\cl F_s)_s$ the canonical
filtration $\cl F_s=\sigma(X_u,u\le s)$.

We shall denote $\P^t_x$ the law on $\cl C_x$ of the rescaled process $\xi^t$ and
$\E^t_x$ the corresponding expectation and similarly we shall denote
$\widehat\P^t_{x,y}$ the law of  $\widehat \xi^t$. We have
\begin{equation}\label{radon-nikodym}
{ \frac {d\widehat\P^t_{x,y}}{d\P^t_x}}_{|_{\cl
F_s}}=\frac{p(t(1-s),X_s,y)}{p(t,x,y)}\ , \qquad 0\le s <1\ .
\end{equation}
Actually it is easy to check that under the probability
$$
\frac{p(t(1-s),X_s,y)}{p(t,x,y)}\, d\P^t_x\ ,
$$
the canonical process up to time $s$ is a Markov process
associated to the transition density $\widehat p_t$ in \paref{widehat-p}.

\paref{radon-nikodym} entails that the probability $\widehat\P^t_{x,y}$ is absolutely continuous with respect to $\widehat\P^t_{x}$ on $(\cl C_x,\cl F_s)$ for every $0\le s<1$ and with density given by the right-hand side of \paref{radon-nikodym}. Of course $\widehat\P^t_{x,y}$ cannot be absolutely continuous with respect to $\widehat\P^t_{x}$ on $\cl C_x$, as the event $\{X_1=y\}$ has probability $1$ under $\widehat\P^t_{x,y}$ and probability $0$ under $\widehat\P^t_{x}$.

Assume that $a(x)=\sigma(x) \sigma(x)^*:S\to \R^{d\times d}$ is invertible for every $x\in S$. Then thanks to the Freidlin-Wentzell theory (see \cite{FV:83}, \cite{az-stflour} or \cite{baldi-chaleyat} e.g.), $\xi^t$, as a $\cl C_x$-valued random variable, enjoys a LDP  with inverse speed $g(t) = t$ and rate function $I$ given, for $\gamma\in \cl C_x$, by
\begin{equation}\label{rate for rescaled}
I(\gamma) = \begin{cases}
\frac 12\int_0^1\langle a(\gamma_s)^{-1}\dot\gamma_s,\dot\gamma_s\rangle\, ds\ &\text{if}\ \gamma \ \text{is absolutely continuous}\ ,\\
+\infty\ &\text{otherwise}\ ,
\end{cases}
\end{equation}
where $\langle \cdot, \cdot \rangle$ denotes the standard inner product in $\R^d$ and $\dot\gamma$ stands for the derivative of $\gamma$. Remark that the drift $b$ does not appear in the expression of $I$. For $y\in\R^d$,
\begin{equation}\label{inf}
\inf_{\gamma\in \cl C_x,\gamma(1)=y}I(\gamma)=\frac 12\, d(x,y)^2\ ,
\end{equation}
where $d$ denotes the Riemannian distance on $S$ associated to the Riemannian metric
$(a(z)^{-1})_{z\in S}$ and also  that the infimum in \paref{inf} is actually
a minimum.

Remark that, thanks to the assumptions on $a$ (ellipticity and local Lipschitz continuity), the Riemannian metric and the Euclidean one on $S$ are
equivalent, in the sense that
$$
c|x-y|\le d(x,y)\le C |x-y|
$$
with constants $c,C$ that are uniform on compact sets. In particular the
paths $\{\gamma,I(\gamma)\le \rho\}$ are uniformly H\"older continuous both in the Euclidean and the $d$ distances.

Recall now Varadhan's estimate (\cite{Varadhan-variable-MR0208191}) which we shall need in the sequel
\begin{equation}\label{ve}
\lim_{t\to 0}t\log p(t,z_1,z_2)=-\frac 12\,d(z_1,z_2)^2\ ,
\end{equation}
this limit being moreover uniform on compact sets.
We shall need later the following version of Varadhan's Lemma (see \cite{PBBP} Theorem 9.2 e.g.).
\begin{theorem}\label{lem-varadhan}
Let $F\subset \cl C_x $ be a closed (resp. $G\subset \cl C_x$ open) set and $\Phi:\cl C_x\to \R$ a bounded continuous function.
Then, according to the previous notations,  we have
\begin{equation}\label{var-maj}
\limsup_{t\to 0}t\log\E^t_x\bigl[\e^{-\frac 1t\,\Phi}1_F\bigr]\le
-\inf_{\gamma\in F}[\Phi(\gamma)+I(\gamma)]\ ,
\end{equation}
$$\Big(resp.\
\liminf_{t\to 0}t\log\E^t_x\bigl[\e^{-\frac 1t\,\Phi}1_G\bigr]\ge
-\inf_{\gamma\in G}[\Phi(\gamma)+I(\gamma)]\Big)\ .
$$
\end{theorem}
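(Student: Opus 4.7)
The plan is to reduce both inequalities to the bare LDP upper/lower bounds for $\xi^t$ together with the continuity of $\Phi$, by a standard partition-of-unity style argument. The rate function $I$ in \paref{rate for rescaled} is good (its level sets $\{I\le L\}$ are compact in $\cl C_x$ for the uniform topology, since they are equicontinuous by the Hölder estimate recalled after \paref{inf}), so exponential tightness of the family $(\P^t_x)_{t>0}$ is available: there exist compact sets $K_L\subset \cl C_x$ with $\limsup_{t\to 0}t\log\P^t_x(K_L^c)\le -L$.

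For the upper bound on a closed set $F$, I would fix $\ep>0$ and split
$$
\E^t_x\bigl[\e^{-\Phi/t}1_F\bigr]=\E^t_x\bigl[\e^{-\Phi/t}1_{F\cap K_L}\bigr]+\E^t_x\bigl[\e^{-\Phi/t}1_{F\setminus K_L}\bigr].
$$
Using $|\Phi|\le M$, the second term is bounded by $\e^{M/t}\P^t_x(K_L^c)$, whose contribution to $\limsup t\log(\cdot)$ is $\le M-L$, negligible as $L\to\infty$. On the compact set $F\cap K_L$, continuity of $\Phi$ lets me choose a finite open cover $U_1,\dots,U_N$ on each of which $\Phi$ oscillates by less than $\ep$. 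Then
$$
\E^t_x\bigl[\e^{-\Phi/t}1_{F\cap K_L}\bigr]\le\sum_{i=1}^N\e^{-(\inf_{U_i}\Phi)/t}\,\P^t_x(\overline{U_i}\cap F),
$$
and the Freidlin-Wentzell upper bound applied to each closed set $\overline{U_i}\cap F$ yields
$$
\limsup_{t\to 0}t\log\E^t_x\bigl[\e^{-\Phi/t}1_{F\cap K_L}\bigr]\le -\min_{1\le i\le N}\Bigl[\inf_{U_i}\Phi+\inf_{\overline{U_i}\cap F}I\Bigr]\le -\inf_{\gamma\in F}[\Phi(\gamma)+I(\gamma)]+\ep,
$$
using the oscillation bound on $\Phi$. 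Combining the two pieces and letting $L\to\infty$, then $\ep\to 0$, gives \paref{var-maj}.

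For the lower bound on an open $G$, I would fix an arbitrary $\gamma_0\in G$ with $I(\gamma_0)<\infty$ (the statement is trivial otherwise). By continuity of $\Phi$ at $\gamma_0$, pick an open neighborhood $V\subset G$ of $\gamma_0$ with $\Phi\le \Phi(\gamma_0)+\ep$ on $V$. Then
$$
\E^t_x\bigl[\e^{-\Phi/t}1_G\bigr]\ge \e^{-(\Phi(\gamma_0)+\ep)/t}\,\P^t_x(V),
$$
and the Freidlin-Wentzell lower bound for the open set $V$ gives
$$
\liminf_{t\to 0}t\log\E^t_x\bigl[\e^{-\Phi/t}1_G\bigr]\ge -\Phi(\gamma_0)-\ep-I(\gamma_0).
$$
Taking the supremum over $\gamma_0\in G$ and then $\ep\to 0$ finishes the argument.

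The only step that is more than routine is the truncation to compact sets in the upper bound; this is where the goodness of $I$ enters decisively, via exponential tightness. Everything else is a standard continuity-of-$\Phi$ partition argument followed by applications of the two halves of the Freidlin-Wentzell LDP already granted to $\xi^t$.
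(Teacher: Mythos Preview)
Your argument is correct and is essentially the standard proof of Varadhan's Lemma (upper bound via exponential tightness plus a finite cover controlling the oscillation of $\Phi$, lower bound by localizing near a single path). One small remark: you justify exponential tightness by goodness of $I$; this is fine on a Polish space such as $\cl C_x$, where an LDP with a good rate function implies exponential tightness, but it is worth saying explicitly since goodness alone, without the Polish assumption, does not give it.

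As for comparison with the paper: there is nothing to compare. The paper does not prove this statement at all; it quotes it as a known version of Varadhan's Lemma, referring to \cite{PBBP}, Theorem~9.2. So your write-up supplies a proof where the paper simply cites one.
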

\section{A Large Deviation result}
We shall obtain our results under the following additional assumption
\begin{assum}\label{assum2}
The open set $S$ endowed with the Riemannian distance $d$ is a complete manifold.
\end{assum}
Remark that Assumption \ref{assum2} is satisfied if also the solution of the SDE
$$
d\xi_s=\sigma(\xi_s)\, dB_s
$$
(i.e. the same equation as in \paref{SDE for X} but with the drift removed) has infinite
lifetime. This a consequence of Proposition 2.10 of \cite{az-stflour}. Unfortunately this
requirement is not satisfied in the Heston volatility model
\begin{equation}\label{heston}
\begin{array}{l}
dS_t=\mu\, dt+\sqrt{v_t}\, dB_1(t)\\
dv_t=(a-bv_t)\, dt+\rho\sqrt{v_t}\, dB_2(t)
\end{array}
\end{equation}
which lives in $S=\R^+\times \R$, as the boundary $\{0\}\times \R$ is at a finite
distance from every point of $S$ (see \cite{MR3168913} e.g.). In \S\ref{sec5} we shall see that, conversely, this assumption is satisfied in the Hull and White model.

Remark however that our results are known to be true also in situations in which neither Assumption \ref{assum2} nor Assumption \ref{assum1} hold. It is the case, for instance, of the CIR model (the process $(v_t)_t$ in the second equation of \paref{heston}). This is an immediate consequence of Theorem 2.1 of \cite{BC2002-MR1925452}.

Remark also that Assumption \ref{assum2} ensures that every ball of radius $R$ in the metric $d$ is relatively compact in the euclidean topology of $S$.

The main object of this section is the following LD estimate.
\begin{theorem}\label{main}
Let $\xi$ be the diffusion that is a solution of the $d$-dimensional
SDE \paref{SDE for X} and assume that the diffusion matrix $a=\sigma\sigma^*$ is invertible in
$S$ and that Assumptions \ref{assum1} and \ref{assum2} are satisfied.
For a fixed $y\in S$, let us consider the diffusion conditioned by $\xi_t=y$ for $t>0$ and
$(\widehat \xi_u^t)_{0\le u\le 1}$ the rescaled process $\widehat \xi^t_u=\widehat \xi_{ut}$.
Then, for every $0\le s<1$, $\widehat \xi^t$, as a $(\cl C_x,\cl F_s)$-valued random variable, satisfies
a LDP with respect to the rate function
\begin{equation}\label{rf0}
\widehat I(\gamma)=\begin{cases} I(\gamma)-I(\gamma_0)&\mbox{if }
\gamma_1=y\cr +\infty&\mbox{otherwise}
\end{cases}
\end{equation}
and inverse speed function $g(t)=t$, where $\gamma_0$ is the geodetic for the metric \paref{rate for rescaled} connecting $x$ to $y$, i.e.
\begin{align*}
&\limsup_{t\to0}t\log\P(\widehat \xi^t\in A_1)\le -\inf_{\gamma,\gamma\in A_1}\widehat I(\gamma)\cr
&\liminf_{t\to0}t\log\P(\widehat \xi^t\in A_2)\ge -\inf_{\gamma,\gamma\in A_2}\widehat I(\gamma)
\end{align*}
for every closed set $A_1\subset \cl C_x$ and every open set $A_2\subset \cl C_x$.
\end{theorem}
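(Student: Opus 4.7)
The plan is to prove the equivalent Laplace principle and let the LDP follow by the standard equivalence. Concretely, for every bounded continuous $\cl F_s$-measurable functional $\Psi:\cl C_x\to\R$, I would establish
\begin{equation*}
\lim_{t\to 0} t\log \widehat\E^t_{x,y}\bigl[\e^{-\Psi/t}\bigr] \;=\; -\inf_{\gamma\in\cl C_x}\bigl[\Psi(\gamma)+\widehat I(\gamma)\bigr].
\end{equation*}
The two ingredients already at hand are the absolute-continuity relation \eqref{radon-nikodym} and Varadhan's heat-kernel estimate \eqref{ve}, together with the version of Varadhan's lemma stated as Theorem~\ref{lem-varadhan}.

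Using \eqref{radon-nikodym},
\begin{equation*}
\widehat\E^t_{x,y}\bigl[\e^{-\Psi/t}\bigr] \;=\; \E^t_x\!\Big[\e^{-\Psi(X)/t}\,\frac{p(t(1-s),X_s,y)}{p(t,x,y)}\Big].
\end{equation*}
By \eqref{ve}, uniformly for $X_s$ in compact subsets of $S$,
\begin{equation*}
t\log\frac{p(t(1-s),X_s,y)}{p(t,x,y)} \longrightarrow -\Phi(X_s),\qquad \Phi(z):=\frac{d(z,y)^2}{2(1-s)}-\frac{d(x,y)^2}{2}.
\end{equation*}
If $\Phi$ happened to be bounded, Theorem~\ref{lem-varadhan} applied to the bounded continuous penalty $\Psi(X)+\Phi(X_s)$ on $\cl C_x$ would immediately yield
\begin{equation*}
\lim t\log\widehat\E^t_{x,y}\bigl[\e^{-\Psi/t}\bigr] \;=\; -\inf_\gamma \bigl[\Psi(\gamma)+\Phi(\gamma_s)+I(\gamma)\bigr],
\end{equation*}
and the identification of this infimum with $-\inf[\Psi+\widehat I]$ is a short variational computation: fixing the prefix $\gamma|_{[0,s]}$, the minimum of $\tfrac12\int_s^1\langle a^{-1}\dot\eta,\dot\eta\rangle\,du$ over extensions $\eta|_{[s,1]}$ constrained to $\eta_1=y$ is realized by the affine reparametrization of the $d$-geodesic from $\gamma_s$ to $y$ and equals $\tfrac{1}{2(1-s)}d(\gamma_s,y)^2$, so that $\Phi(\gamma_s)+I(\gamma)$ and $\widehat I(\gamma)$ differ by the $\gamma$-independent constant $I(\gamma_0)=\tfrac12 d(x,y)^2$.

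The main technical obstacle is that $\Phi$ is continuous but not bounded above, so Theorem~\ref{lem-varadhan} does not apply out of the box. I would treat the two inequalities separately. For the \emph{upper bound}, truncate $\Phi_N:=\Phi\wedge N$: since $\e^{-\Phi(X_s)/t}\le \e^{-\Phi_N(X_s)/t}$ and $\Phi_N$ is bounded continuous, Theorem~\ref{lem-varadhan} applied to the penalty $\Psi(X)+\Phi_N(X_s)$ gives $\limsup t\log\widehat\E^t_{x,y}[\e^{-\Psi/t}]\le -\inf_\gamma[\Psi(\gamma)+\Phi_N(\gamma_s)+I(\gamma)]$; this last infimum stabilizes to $-\inf_\gamma[\Psi+\Phi+I]$ once $N$ is large enough, because any near-minimizer has $I(\gamma)$ bounded (by comparison with the geodesic $\gamma_0$) and therefore, by the H\"older estimate recalled in Section~2, $d(\gamma_s,y)$ bounded independently of $N$. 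For the \emph{lower bound}, I would restrict the integral to the $d$-ball $\{d(X_s,y)\le R\}$, on which $\Phi$ is bounded, apply Theorem~\ref{lem-varadhan} there, and send $R\to\infty$; no infimum is lost since near-optimizers again stay in a bounded $d$-region. Assumption~\ref{assum2} is essential throughout: it guarantees that closed $d$-balls in $S$ are compact, which is precisely what makes \eqref{ve} available uniformly on the regions where $\Phi$ needs to be controlled.
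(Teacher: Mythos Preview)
Your overall architecture is the same as the paper's: pass to the Laplace principle, use \eqref{radon-nikodym} to reduce to an expectation under $\P^t_x$, invoke Varadhan's small-time estimate \eqref{ve} for the heat kernel, and finish with the variational identity (your sketch of which is exactly Lemma~\ref{lb}). The lower bound, by restriction to a compact $d$-ball, is also what the paper does.

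The gap is in your upper bound. You write ``since $\e^{-\Phi(X_s)/t}\le \e^{-\Phi_N(X_s)/t}$\,\dots'', but the expectation you must bound from above is $\E^t_x\bigl[\e^{-\Psi/t}\,p(t(1-s),X_s,y)\bigr]$, not $\E^t_x\bigl[\e^{-(\Psi+\Phi(X_s))/t}\bigr]$. Varadhan's estimate \eqref{ve} only gives $p(t(1-s),z,y)\le \e^{(\ep-\frac{1}{2(1-s)}d(z,y)^2)/t}$ \emph{uniformly on compact sets}; on $\{X_s\notin\overline{B_R}\}$ you have no a priori bound on the heat kernel at all, so the truncation $\Phi_N=\Phi\wedge N$ is solving the wrong problem. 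The substitution of $p$ by $\e^{-\Phi/t}$ is exactly what fails off compacts, and truncating $\Phi$ afterwards does not repair it.

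The paper closes this hole with an extra ingredient you are missing: a strong-Markov/maximum-principle argument (Proposition~\ref{maximum}) showing that $p(u,z,y)\le C:=\sup_{u\le 1,\zeta\in\partial B_{R_0}(x)}p(u,\zeta,y)<\infty$ for every $z\notin B_{R_0}(x)$ and $u\le 1$. With this uniform constant bound on the density, the contribution of $\{X_s\notin B_R(x)\}$ is dominated by $C\,\e^{M/t}\,\P^t_x(X_s\notin B_R(x))$, whose exponential rate is at most $M-\tfrac12 R^2$ by the unconditioned LDP; choosing $R$ large (here Assumption~\ref{assum2} is used to ensure $\overline{B_R(x)}\subset S$ is compact for every $R$) makes this piece negligible compared with the compact-region term. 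Your truncation would work if you first split off and controlled the non-compact region in this way; as written, the upper bound is incomplete.
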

Remark that Theorem \ref{main} does not give a LDP on the
whole of $\cl F_1$, but that the rate function \paref{rf0} does not depend on $s$.
\medskip

\noindent \begin{proof}
It is well-known that the LDP is equivalent to the Laplace principle (see \cite{dupuis-ellis-MR1431744}, Theorem 1.2.3 e.g.), i.e. we must prove that for every bounded continuous $\cl F_s$-measurable functional $F: \cl C_x\to \R$,
\begin{equation}\label{Laplace}
\lim_{t\to 0}t\log\widehat \E^t_{x,y}\bigl[\e^{-\frac 1t\, F}\bigr]=-\inf_{\gamma\in \cl C_x} (F(\gamma)+\widehat I(\gamma))\ .
\end{equation}
$F$ being $\cl F_s$-measurable, we have by \paref{radon-nikodym}

\begin{equation}\label{eq1}
\log\widehat \E^t_{x,y}\bigl[\e^{-\frac 1t\, F}\bigr]=
-\log p(t,x,y)+\log\E^t_{x}\bigl[\e^{-\frac 1t\, F}p(t(1-s),X_s,y)\bigr]\ .
\end{equation}
Varadhan's estimate \paref{ve} immediately gives $\lim_{t\to 0}t\log p(t,x,y)=-\frac 12\,d(x,y)^2=-I(\gamma_0)$.
Let us turn our attention to the remaining term
$$
\log\E^t_{x}\bigl[\e^{-\frac 1t\, F}p(t(1-s),X_s,y)\bigr]=
\log\E^t_{x}\bigl[\e^{-\frac 1t\, (F-t\log p(t(1-s),X_s,y))}\bigr]\ .
$$
We shall now take advantage of the LD estimate enjoyed by the family of probabilities $(\P^t_x)_t$. If we were allowed to replace in the r.h.s. the quantity $t\log p(t(1-s),X_s,y)$ by $-\frac 1{2(1-s)}\, d(X_s,y)^2$, which is its limit as $t\to 0$ by \paref{ve}, we could apply
Varadhan's Lemma \ref{lem-varadhan} and the LDP that holds for
the unconditioned processes $(\P_x^t)_t$ and find that
\begin{equation}\label{heuristic}
\begin{array}{c}
\displaystyle\lim_{t\to 0}t \log
\E^t_{x}\bigl[\e^{-\frac 1t\, (F-t\log p(t(1-s),X_s,y))}\bigr]
=\displaystyle\lim_{t\to 0}t \log\E^t_{x}
\bigl[\e^{-\frac 1t\, (F+\frac 1{2(1-s)}\, d(X_s,y)^2)}\bigr]=\\
\displaystyle=-\inf_{\gamma\in \cl C_x}\Bigl(F(\gamma)+I(\gamma)+\frac 1{2(1-s)}\,d(\gamma_s,y)^2\Bigr)\ .
\end{array}
\end{equation}
In Lemma \ref{lb} we prove that
$$
\inf_{\gamma\in \cl C_x}\Bigl(F(\gamma)+I(\gamma)+\frac 1{2(1-s)}\,d(\gamma_s,y)^2\Bigr)=\inf_{\widehat\gamma, \widehat\gamma_1=y}(F(\widehat\gamma)+I(\widehat\gamma))
$$
so that \paref{heuristic} will allow to end the proof.
Let us prove \paref{heuristic} rigorously. Let
$$
k_s:=\inf_{\gamma\in \cl C_x}\Bigl(F(\gamma)+I(\gamma)+\frac 1{2(1-s)}\,d(\gamma_s,y)^2\Bigr)\ .
$$
Let, for $\ep>0$ fixed, $R$ be large enough so that
$$
\inf_{\gamma,\gamma_s\in \overline{B_R(x)}}\Bigl(F(\gamma)+I(\gamma)+\frac 1{2(1-s)}\,d(\gamma_s,y)^2\Bigr)\le k_s+\ep\ ,
$$
where $B_R(x)$ stands for the open ball for the distance $d$ centered at $x$ with radius $R$. Remark that, thanks to Assumption \ref{assum2}, $\overline{B_R(x)}\subset S$.
By Varadhan's estimate \paref{ve}, $\overline{B_R(x)}$ being compact, we have that for $t$ small and uniformly for $z\in \overline{B_R(x)}$,
$$
-\ep-\frac 1{2(1-s)}\,d(z,y)^2\le t\log p(t(1-s),z,y))\le\ep-\frac 1{2(1-s)}\,d(z,y)^2\ ,
$$
so that, by the LD lower bound enjoyed by $(\P^t_x)_t$,
$$
\dlines{
\liminf_{t\to 0}t \log\E^t_{x}
\bigl[\e^{-\frac 1t\, (F-t\log p(t(1-s),X_s,y))}\bigr] \ge\cr
\ge\liminf_{t\to 0}t \log
\E^t_{x}\bigl[\e^{-\frac 1t\, (F-t\log p(t(1-s),X_s,y))}
1_{\{X_s\in B_{R}(x)\}}\bigr]\ge\cr
\ge\liminf_{t\to 0}t \log
\E^t_{x}\bigl[\e^{-\frac 1t\, (F+\ep+\frac 1{2(1-s)}\,d(X_s,y)^2)}
1_{\{X_s\in B_{R}(x)\}}\bigr]\ge\cr
\ge-\inf_{\gamma,\gamma_s\in B_{R}(x)}
\Bigl(F(\gamma)+I(\gamma)+\frac 1{2(1-s)}\,d(\gamma_s,y)^2\Bigr)-\ep\ge\cr
\ge-\inf_{\gamma}\Bigl(F(\gamma)+I(\gamma)-\frac 1{2(1-s)}\,d(\gamma_s,y)^2\Bigr)-2\ep\ .
}
$$
Let us look for a converse inequality and decompose the quantity $\E^t_{x}\bigl[\e^{-\frac 1t\, (F-t\log p(t(1-s),X_s,y)}\bigr]$ into the sum of the two terms
\begin{equation}\label{2terms}
\begin{array}{c}
I_1(t)=\E^t_{x}\bigl[\e^{-\frac 1t\, (F-t\log p(t(1-s),X_s,y))}1_{\{X_s\in \overline{B_R(x)}\}}\bigr]\ ,\cr
I_2(t)=\E^t_{x}\bigl[\e^{-\frac 1t\, (F-t\log p(t(1-s),X_s,y))}1_{\{X_s\in {\overline{B_R(x)}}^{\,c}\}}\bigr]\ .
\end{array}
\end{equation}
By Varadhan's lemma \ref{lem-varadhan}, the set $\{\gamma_s\in \overline{B_R(x)}\}$ being closed, we have similarly
$$
\dlines{
I_1(t)=\limsup_{t\to 0}t \log\E^t_{x}\bigl[
\e^{-\frac 1t\, (F-t\log p(t(1-s),X_s,y))}1_{\{X_s\in \overline{B_R(x)}\}}\bigr]\le\cr
\le\limsup_{t\to 0}t \log\E^t_{x}\bigl[
\e^{-\frac 1t\, (F-\ep+\frac 1{2(1-s)}\,d(X_s,y)^2)}1_{\{X_s\in \overline{B_R(x)}\}}
\bigr]\le\cr
\le -\inf_{\gamma,\gamma_s\in \overline{B_R(x)}}
\Bigl(F(\gamma)+I(\gamma)+\frac 1{2(1-s)}\,d(\gamma_s,y)^2\Bigr)+\ep\le -k_s+2\ep\ .
}
$$
As for $I_2(t)$, let $R_0>0$ be large enough so that $y\in B_{R_0}(x)$ and let
$C=\sup_{u\le 1,z\in\pa B_{R_0}(x)} p(u,z,y)$. As
the density $p$ is a continuous function of $(t,x)$ as far as $z$
is away from $y$, we have $C<+\infty$.
as a function of $(t,x)$
By a simple application of the strong Markov property (see Proposition \ref{maximum} below), we have for $R>R_0$
$$
p(u,z,y)\le C\quad \mbox{for every }0\le u\le 1,z\in B_{R}(x)^c\ .
$$
Taking $R>R_0$, we have, denoting by $M$ a bound for $|F|$,
$$
\dlines{
I_2(t)\le\E^t_{x}\bigl[\e^{-\frac 1t\, (F-t\log p(t(1-s),X_s,y))}
1_{\{X_s\in B_R(x)^c\}}\bigr]\le
C\,\E^t_{x}\bigl[\e^{-\frac 1t\, F}1_{\{X_s\in B_R(x)^c\}}\bigr]\le\cr
\le C\,\e^{\frac Mt}\P^t_{x}(X_s\in B_R(x)^c)\ .\cr
}
$$
By the LDP satisfied by $X_s$ under $\P^t_{x}$ as $t\to 0$, we have for $t$ small
$$
\limsup_{t\to0}t\log\P^t_{x}(X_s\in B_R(x)^c)\le -\frac 1{2}\, R^2\ ,
$$
recall actually that, as $B_R(x)$ is the ball in the distance $d$, we have $I(\gamma)\ge \frac 12\, R^2$ for every path $\gamma$ such that $\gamma_s\in B_R(x)^c$. Thanks to Assumption \ref{assum2} we can choose $R$ as large as we want.
In particular we can have $R>M+k_s$. With this choice of $R$ we have
$$
\limsup_{t\to 0}t\log \E^t_{x}\bigl[\e^{-\frac 1t\, (F-t\log p(t(1-s),X_s,y))}1_{\{X_s\in B_R^c\}}\bigr]\le M-R<-k_s\ .
$$
Finally, putting together the estimates we have
$$
\limsup_{t\to 0}\E^t_{x}\bigl[\e^{-\frac 1t\, (F-t\log p(t(1-s),X_s,y))}\bigr]=\max(-k_s,M-R)=-k_s
$$
which finishes the proof.
\end{proof}
\finedim

\noindent Remark, as a consequence of Theorem \ref{main}, that the rate function $\widehat I$ of the rescaled bridge does not depend on the drift $b$ of \paref{SDE for X}.
\begin{lemma}\label {lb} For every continuous $\cl F_s$-measurable functional $F$ we have
\begin{equation}\label{hat-nohat}
\inf_\gamma\Bigl(I(\gamma)+F(\gamma)+\tfrac
1{2(1-s)}\,d(\gamma_s,y)^2\Bigr)=
\inf_{\widehat\gamma,\widehat\gamma_1=y}\bigl(I(\widehat\gamma)+
F(\widehat\gamma)\bigr)\ .
\end{equation}
\end{lemma}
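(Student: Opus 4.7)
The plan is to establish both inequalities in \paref{hat-nohat} by a simple cut-and-paste construction on paths, splitting each action integral at time $s$ and exploiting the fact that $F$ is $\cl F_s$-measurable, hence invariant under modification of a path after time $s$. The only non-obvious ingredient is the additive/variational identity
$$
\inf_{\gamma(s)=z_1,\ \gamma(1)=z_2}\tfrac12\int_s^1\langle a(\gamma_u)^{-1}\dot\gamma_u,\dot\gamma_u\rangle\,du \;=\; \tfrac{1}{2(1-s)}\,d(z_1,z_2)^2,
$$
which is just \paref{inf} rescaled to the interval $[s,1]$, with the infimum attained by a constant-speed minimizing geodesic.

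For the inequality ``$\ge$'' in \paref{hat-nohat}, I would fix any $\widehat\gamma\in\cl C_x$ with $\widehat\gamma_1=y$ and finite action, and introduce $\gamma\in\cl C_x$ obtained by keeping $\widehat\gamma$ on $[0,s]$ and extending it by the constant value $\widehat\gamma_s$ on $[s,1]$. Since $\gamma$ and $\widehat\gamma$ agree on $[0,s]$, the $\cl F_s$-measurability of $F$ yields $F(\gamma)=F(\widehat\gamma)$ and $\gamma_s=\widehat\gamma_s$, while the constant extension contributes no action, so $I(\gamma)=I(\widehat\gamma|_{[0,s]})$. Combined with additivity $I(\widehat\gamma)=I(\widehat\gamma|_{[0,s]})+I(\widehat\gamma|_{[s,1]})$ and the lower bound $I(\widehat\gamma|_{[s,1]})\ge\tfrac{1}{2(1-s)}d(\widehat\gamma_s,y)^2$ displayed above, one obtains
$$
I(\gamma)+F(\gamma)+\tfrac{1}{2(1-s)}d(\gamma_s,y)^2\ \le\ I(\widehat\gamma)+F(\widehat\gamma),
$$
and taking infima gives the first half of the identity.

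For the reverse inequality, I would start with any $\gamma\in\cl C_x$ with finite $I(\gamma)+\tfrac{1}{2(1-s)}d(\gamma_s,y)^2$ and build $\widehat\gamma$ by keeping $\gamma|_{[0,s]}$ and replacing $\gamma|_{[s,1]}$ by a constant-speed minimizing geodesic (in the metric $d$) from $\gamma_s$ to $y$, reparametrized to be defined on $[s,1]$. Its existence inside $S$ is exactly what Assumption \ref{assum2} provides through Hopf--Rinow; the geodesic portion saturates the variational identity, so $I(\widehat\gamma|_{[s,1]})=\tfrac{1}{2(1-s)}d(\gamma_s,y)^2$. Since $\widehat\gamma=\gamma$ on $[0,s]$ we have $F(\widehat\gamma)=F(\gamma)$ and $I(\widehat\gamma|_{[0,s]})=I(\gamma|_{[0,s]})\le I(\gamma)$, whence
$$
I(\widehat\gamma)+F(\widehat\gamma)\ \le\ I(\gamma)+F(\gamma)+\tfrac{1}{2(1-s)}d(\gamma_s,y)^2,
$$
and the other inequality follows by taking infima over $\gamma$.

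The proof is essentially two lines once these identifications are in place; the main conceptual content is the scaling identity relating the action on $[s,1]$ to $d(\,\cdot\,,y)^2$, which is what lets the ``penalty'' $\tfrac{1}{2(1-s)}d(\gamma_s,y)^2$ on the left be converted into the genuine terminal constraint $\widehat\gamma_1=y$ on the right. The only genuine obstacle is ensuring the geodesic segment from $\gamma_s$ to $y$ actually exists and stays in $S$ --- precisely the role of the completeness Assumption \ref{assum2}; without it one could not in general close a path up to $y$ inside $S$ while keeping its action equal to $\tfrac{1}{2(1-s)}d(\gamma_s,y)^2$.
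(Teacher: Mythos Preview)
Your proof is correct and follows essentially the same cut-and-paste argument as the paper: freeze the path after time $s$ to pass from $\widehat\gamma$ to $\gamma$, and replace the segment after time $s$ by a minimizing geodesic to $y$ to pass from $\gamma$ to $\widehat\gamma$, using in both directions the identity/inequality between the action on $[s,1]$ and $\tfrac{1}{2(1-s)}d(\gamma_s,y)^2$. If anything, you are slightly more explicit than the paper in invoking Assumption~\ref{assum2} via Hopf--Rinow to guarantee existence of the minimizing geodesic inside $S$.
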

\begin{proof}
The main point is the rather obvious inequality:
\begin{equation}\label{ineq-j}
\frac 1{1-s}\,d(\gamma_1,\gamma_s)^2\le\int_s^1\langle
a(\gamma_u)^{-1}\dot \gamma_u,\dot \gamma_u\rangle\, du
\end{equation}
which becomes an equality if $\gamma$ in the time interval $[s,1]$ is the
geodesic connecting $\gamma_s$ to $\gamma_1$.

Let $\widehat \gamma$ be such that $I(\widehat \gamma)<+\infty$ and $\widehat
\gamma_1=y$ and let $\gamma$ be the path that coincides with $\widehat
\gamma$ until time $s$ and is constant in $[s,1]$. We have $F(\widehat\gamma)=F(\gamma)$, as the two paths
$\gamma$ and $\widehat\gamma$ coincide up to time $s$ and also, of course,
$$
\int_0^s\langle a(\widehat
\gamma_u)^{-1}\dot {\widehat \gamma}_u,\dot {\widehat \gamma}_u\rangle\,
du=\int_0^1\langle a(
\gamma_u)^{-1}\dot {\gamma}_u,\dot { \gamma}_u\rangle\,
du\ .
$$
Therefore, thanks to \paref{ineq-j},
$$
\dlines{ I(\widehat \gamma)=\frac 12\,\int_0^s\langle a(\widehat
\gamma_u)^{-1}\dot {\widehat \gamma}_u,\dot {\widehat \gamma}_u\rangle\,
du+ \frac 12\,\int_s^1 \langle a(\widehat \gamma_u)^{-1}\dot {\widehat
\gamma}_u,\dot {\widehat \gamma}_u\rangle\, du
\ge\cr \ge\frac 12\,\int_0^1\langle
a(\gamma_u)^{-1}\dot \gamma_u,\dot \gamma_u\rangle\, du+\frac
1{2(1-s)}\,d(\gamma_s,y)^2 =I(\gamma)+\frac
1{2(1-s)}\,d(\gamma_s,y)^2\ .\cr }
$$
As $F(\gamma)=F(\widehat\gamma)$,
we have proved that, for every $\widehat\gamma$ such that $\widehat\gamma_1=y$, there exists an unconstrained path $\gamma$ such that
\begin{equation}\label{mino}
F(\gamma)+I(\gamma)+\tfrac
1{2(1-s)}\,d(\gamma_s,y)^2\le F(\gamma)+I(\widehat\gamma)\ .
\end{equation}
Conversely, for a given path $\gamma$ such that
$I(\gamma)<+\infty$, let $\widehat\gamma$ the path that coincides
with $\gamma$ up to time $s$ and then connects $\widehat\gamma_s=\gamma_s$
with the value
$\widehat\gamma_1=y$ with a geodesic in the time interval $[s,1]$.
For such a path $\widehat\gamma$ we have equality in \paref{ineq-j} and therefore
$$
\dlines{ I(\widehat \gamma)=\frac 12\,\int_0^s\langle a(\widehat
\gamma_u)^{-1}\dot {\widehat \gamma}_u,\dot {\widehat \gamma}_u\rangle\,
du+ \frac 12\,\int_s^1\langle a(\widehat \gamma_u)^{-1}\dot {\widehat
\gamma}_u,\dot {\widehat \gamma}_u\rangle\, du=\cr
=\frac
12\,\int_0^s\langle a(\gamma_u)^{-1}\dot {\gamma}_u,\dot
{\gamma}_u\rangle\, du+\frac 1{2(1-s)}\,d(\gamma_s,y)^2\le\cr \le \frac
12\,\int_0^1\langle a(\gamma_u)^{-1}\dot {\gamma}_u,\dot
{\gamma}_u\rangle\, du+\frac 1{2(1-s)}\,d(\gamma_s,y)^2=\cr
=I(\gamma) +  \frac 1{2(1-s)}\,d(\gamma_s,y)^2\ .}
$$
Therefore, again using the fact that $F(\gamma)=F(\widehat\gamma)$,
we see that in \paref{mino} also the $\ge$ sign holds, so that
the equality is established.
\end{proof}
\finedim
The following result, that was used in the proof of Theorem \ref{main}, is a
simple consequence of the strong Markov property.
\begin{prop}\label{maximum} Let $X$ be a $S$-valued continuous Markov process with transition density $p$ enjoying the Feller property. Let $D\subset S$ an open set and $y\in D$. Then if $z\in D^c$ we have
$$
p(t,z,y)=\E^z[1_{\{\rho<t\}}p(t-\rho,X_\tau,y)]\ ,
$$
where $\rho$ denotes the entrance time in $D$. In particular
$$
p(t,z,y)\le \sup_{s\le t,\zeta\in\pa D}p(s,\zeta,y)\ .
$$
\end{prop}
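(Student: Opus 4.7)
The plan is a textbook application of the strong Markov property at the first entrance time in $D$, combined with the observation that a continuous path starting outside $D$ can only reach an interior point $y\in D$ after first crossing $\partial D$.

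First I would rewrite the claim in its integrated form. For any Borel set $U\subset D$ with $y\in U$ we have
$$
\P^z(X_t\in U)=\int_U p(t,z,y')\,dy'\ .
$$
Since $z\in D^c$, $U\subset D$ and the paths of $X$ are continuous, the event $\{X_t\in U\}$ forces the trajectory to have entered the open set $D$ before time $t$; in particular $\{X_t\in U\}\subset\{\rho<t\}$ and $X_\rho\in\partial D$ on this event.

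Next I would apply the strong Markov property at the stopping time $\rho$, which is legitimate since the Feller property of the transition density guarantees the strong Markov property. This gives
$$
\P^z(X_t\in U)=\E^z\bigl[1_{\{\rho<t\}}\,\P^{X_\rho}(X_{t-\rho}\in U)\bigr]=\E^z\Bigl[1_{\{\rho<t\}}\int_U p(t-\rho,X_\rho,y')\,dy'\Bigr]\ .
$$
Using Fubini to exchange integration and expectation (the integrand is nonnegative) and comparing the resulting expression with $\int_U p(t,z,y')\,dy'$, I obtain the pointwise identity
$$
p(t,z,y)=\E^z\bigl[1_{\{\rho<t\}}\,p(t-\rho,X_\rho,y)\bigr]
$$
for almost every $y\in U$. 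Continuity in $y$ of both sides (again via the Feller property) upgrades this to every $y\in D$.

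Finally, from the established identity and the fact that on $\{\rho<t\}$ one has $X_\rho\in\partial D$ and $t-\rho\le t$, I estimate
$$
p(t-\rho,X_\rho,y)\le \sup_{s\le t,\,\zeta\in\partial D}p(s,\zeta,y)\ ,
$$
and bounding $\E^z[1_{\{\rho<t\}}]\le 1$ yields the second assertion. The main subtlety is the measure-theoretic justification for passing from the probability statement to the density statement at the single point $y$; here the Feller property (which delivers joint continuity of $p$ in its spatial arguments off the diagonal singularity, and in particular at $y\in D$ for $\zeta\in\partial D$ with $\partial D$ and $y$ separated) is precisely what one needs.
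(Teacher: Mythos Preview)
Your proof is correct and is exactly the argument the paper has in mind: the paper does not actually prove Proposition~\ref{maximum} but merely records it as ``a simple consequence of the strong Markov property,'' and you have spelled out precisely that consequence. One minor remark: your appeal to the Feller property to pass from the a.e.\ identity to the pointwise identity at $y$ is slightly informal, since the Feller property per se concerns continuity of $z\mapsto P_tf(z)$ rather than of the density in the terminal variable; but in the setting of the paper (elliptic diffusion with a continuous transition density, cf.\ the use of Varadhan's estimate~\paref{ve}) the required continuity is available, so the argument goes through.
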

\section{Application: asymptotics of exit time probabilities}
Let $D\subset S$ be any open set (possibly unbounded)  having a $C^1$ boundary and such that $x,y\in D$.
Let us denote by $\tau$ the exit time of the rescaled bridge $\widehat \xi^t=\widehat \xi^t_{x,y}$ from $D$, i.e.
$$
\tau := \inf \lbrace 0<s<1 : \widehat \xi^t_s \notin D   \rbrace\ .
$$
We want to evaluate the LD asymptotics of the probability $\widehat \P^t_{x,y} \bigl( \tau < 1\bigr)$
as $t\to 0$, more precisely the limit
\begin{equation}\label{limite}
\lim_{t\to 0} t\log \widehat \P^t_{x,y} \bigl( \tau < 1\bigr)\ .
\end{equation}
If we had LD estimates up to time $1$ for the rescaled bridge,
the answer would immediately follow. However we shall see that the LD
estimates up to time $s < 1$ of Theorem \ref{main} are sufficient in order to obtain
\paref{limite}, even if we shall be confronted with some additional difficulties.
In what follows we are inspired by the proof of Lemma 4.2. in \cite{BC2002-MR1925452}.

Let $s<1$ and let us also denote $\tau:\cl C_x\to\R^+\cup\{+\infty\}$ the first time $u$ such that $\gamma(u)\not\in D$. Similarly $\tau_{\overline D}$ will be equal to the first time $u$ such that $\gamma(u)\not\in \overline D$. Remark that as the diffusion matrix $a$ is assumed to be positive definite the exit times $\tau$ and $\tau_{\overline D}$ are $\P^t_x$-a.s.-equal  for every $t>0$. Thanks to Theorem \ref{main} we have
\begin{equation}\label{inf-s}
\lim_{t\to 0}t\log\widehat \P^t_{x,y} \bigl( \tau < s\bigr)=\lim_{t\to 0}t\log\widehat \P^t_{x,y} \bigl( \tau_{\overline D} < s\bigr)\ge-\inf_{\gamma,\tau_{\overline D}(\gamma)<s}\widehat I(\gamma)\ .
\end{equation}
Actually the functional $\tau_{\overline D}:\cl C_x\to\R^+\cup\{+\infty\}$ is upper semi-continuous and the set $\{\gamma;\tau_{\overline D}(\gamma) < s\}\subset \cl C_x$ is open. Of course if $\tau(\gamma_0)<1$, i.e. the geodesic joining $x$ to $y$ exits from $D$ then the infimum at the right-hand side in \paref{inf-s} is equal to $0$ and the probability of exit goes to $1$. Otherwise we have
$$
\inf_{\gamma,\tau(\gamma)<s}\widehat I(\gamma)=\inf_{z\in\pa D}\,\,\inf_{u<s}\,\,
\inf_{\gamma,\gamma(u)=z}\widehat I(\gamma)\ .
$$
It is clear that
\begin{equation}\label{infimum}
\inf_{\gamma,\gamma(u)=z}\widehat I(\gamma) =\frac 12\Bigl(\frac 1u\, d(x,z)^2+
\frac 1{1-u}\,d(z,y)^2-d(x,y)^2\Bigr)\ .
\end{equation}
An elementary computation gives that the infimum over $u\in]0,1[$ of the quantity above is attained at
$$
\overline u=\frac {d(x,z)}{d(x,z)+d(z,y)}
$$
and that the value of the minimum for $u\in[0,1]$ of the quantity in \paref{infimum}
is equal to
$$
\frac 12\,\Bigl(\bigl(d(x,z)+d(z,y)\bigr)^2-d(x,y)^2\Bigr)\ .
$$
Therefore
\begin{equation}\label{lb}
\lim_{t\to 0}t\log\widehat \P^t_{x,y} \bigl( \tau < s\bigr)\ge-\inf_{z\in\pa D}
\frac 12\,\Bigl(\bigl(d(x,z)+d(z,y)\bigr)^2-d(x,y)^2\Bigr)
\end{equation}
{\it for every $s\ge \overline u$}, i.e. the asymptotics of $t\log\widehat \P^t_{x,y} \bigl( \tau < s\bigr)$ does not depend on $s$ as soon as $s$ is close enough to $1$. This gives immediately
$$
\liminf_{t\to 0}t\log\widehat \P^t_{x,y} \bigl( \tau < 1\bigr)\ge
-\inf_{z\in\pa D}
\frac 12\,\Bigl(\bigl(d(x,z)+d(z,y)\bigr)^2-d(x,y)^2\Bigr)\ .
$$
In order to prove the next statement we just need to take care of the converse inequality.
\begin{prop}\label{upper}
Let $D$ be an open set having a $C^1$ boundary. Then
\begin{equation}
\lim_{t\to 0} t\log \widehat \P^t_{x,y} \bigl( \tau < 1\bigr)=
-\inf_{z\in\pa D}
\frac 12\,\Bigl(\bigl(d(x,z)+d(z,y)\bigr)^2-d(x,y)^2\Bigr)\ .
\end{equation}
\end{prop}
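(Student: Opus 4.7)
Only the matching upper bound $\limsup_{t\to 0}\,t\log \widehat \P^t_{x,y}(\tau<1)\le -\phi^*$ remains, with $\phi^*:=\inf_{z\in\partial D}\tfrac12\bigl((d(x,z)+d(z,y))^2-d(x,y)^2\bigr)$. The plan is to introduce a small parameter $\delta>0$, split
$$\widehat \P^t_{x,y}(\tau<1)\le \widehat \P^t_{x,y}(\tau\le 1-\delta)+\widehat \P^t_{x,y}(1-\delta<\tau<1),$$
bound the two summands by different techniques, combine via $t\log(a+b)\le t\log 2+\max(t\log a,t\log b)$, and then let $\delta\to 0$.

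The first summand is $\cl F_{1-\delta}$-measurable and the set $\{\gamma:\tau(\gamma)\le 1-\delta\}\subset\cl C_x$ is closed: given $\gamma^n\to\gamma$ uniformly with $u_n\in[0,1-\delta]$ and $\gamma^n(u_n)\in D^c$, a subsequence $u_n\to u^*\in[0,1-\delta]$ together with closedness of $D^c$ forces $\gamma(u^*)\in D^c$. So Theorem \ref{main} at $s=1-\delta$ gives
$$\limsup_{t\to 0} t\log \widehat\P^t_{x,y}(\tau\le 1-\delta)\le-\rho(\delta),\qquad \rho(\delta):=\inf_{\tau(\gamma)\le 1-\delta}\widehat I(\gamma).$$
Parametrizing a competitor by its first hitting point $z\in\partial D$ at time $u\le 1-\delta$ and using \paref{infimum},
$$\rho(\delta)=\inf_{z\in\partial D,\,u\in(0,1-\delta]}\tfrac 12\!\left(\tfrac{d(x,z)^2}u+\tfrac{d(z,y)^2}{1-u}\right)-\tfrac 12 d(x,y)^2.$$
The unconstrained $u$-optimum $\bar u_z=d(x,z)/(d(x,z)+d(z,y))$ is strictly less than $1$ for every $z$, so the constraint becomes inactive for $\delta$ small; a monotone approximation argument gives $\rho(\delta)\nearrow\phi^*$ as $\delta\to 0$.

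For the tail, \paref{radon-nikodym} at time $s\in[1-\delta,1)$ combined with the strong Markov property at $\tau$ (Chapman--Kolmogorov rewrites $p(t(1-s),X_s,y)$ as $p(t(1-\tau),X_\tau,y)$ on $\{\tau\le s\}$), then monotone convergence as $s\nearrow 1$, yields
$$\widehat \P^t_{x,y}(1-\delta<\tau<1)=\frac 1{p(t,x,y)}\E^t_x\bigl[\mathbf{1}_{\{1-\delta<\tau<1\}}\,p(t(1-\tau),X_\tau,y)\bigr].$$
Fix $R\ge d_0$ large enough so that the compact set $\overline{B_R(y)}$ (by Assumption \ref{assum2}) contains $x$. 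On $\{X_\tau\notin\overline{B_R(y)}\}$, Proposition \ref{maximum} applied to the open ball $B_R(y)$ dominates the density by $\sup_{u\le\delta t,\,\zeta\in\partial B_R(y)}p(u,\zeta,y)$, while on $\{X_\tau\in\overline{B_R(y)}\cap\partial D\}$ the density is estimated directly. In both cases, uniform Varadhan \paref{ve} on the relevant compact set, the inequality $(1-\tau)t\le\delta t$, and monotonicity of $u\mapsto\exp(-c/u)$ for $c>0$ give
$$p(t(1-\tau),X_\tau,y)\le\exp\!\Bigl(-\tfrac{d_0^2-2\eta}{2\delta t}\Bigr),\qquad d_0:=\inf_{\zeta\in\partial D}d(\zeta,y)>0$$
(positivity of $d_0$ follows from $y\in D$ open together with Assumption \ref{assum2}). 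Combining with the lower estimate $p(t,x,y)\ge\exp((-\tfrac12 d(x,y)^2-\eta)/t)$ from \paref{ve} produces
$$\limsup_{t\to 0}\,t\log\widehat\P^t_{x,y}(1-\delta<\tau<1)\le \tfrac12 d(x,y)^2+2\eta-\tfrac{d_0^2-2\eta}{2\delta},$$
whose right-hand side tends to $-\infty$ as $\delta\to 0$.

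Putting the two estimates together and sending $\delta\to 0$, then $\eta\to 0$, delivers $\limsup_t t\log\widehat\P^t_{x,y}(\tau<1)\le-\phi^*$. The hard part is the tail step: one must control $p(t(1-\tau),\cdot,y)$ uniformly when $(1-\tau)t$ may be arbitrarily small and $X_\tau$ may visit distant parts of a possibly unbounded $\partial D$. Proposition \ref{maximum} is what reduces the spatial issue to compact subsets, and the increasing behavior of $u\mapsto\exp(-c/u)$ is what lets the small-time factor $(1-\tau)t$ be traded for the fixed $\delta t$ inside the exponential, keeping the overall exponent tractable.
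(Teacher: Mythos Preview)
Your argument is correct and is genuinely different from the paper's. A small slip: you write $\rho(\delta)\nearrow\phi^*$, but as $\delta\downarrow 0$ the constraint $u\le 1-\delta$ relaxes, so $\rho(\delta)$ is non\emph{increasing}; in fact $\rho(\delta)\searrow\phi^*$. This is harmless, since all you actually need for the upper bound is the trivial inequality $\rho(\delta)\ge\phi^*$ (infimum over a smaller class), and then to pick $\delta$ small enough that the tail term falls below $-\phi^*$.

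The paper proceeds differently. It does \emph{not} split off a time window near $1$. Instead it writes, via optional stopping, $\widehat\P^t_{x,y}(\tau<1)=p(t,x,y)^{-1}\E^t_x[p(t(1-\tau),X_\tau,y)\mathbf 1_{\{\tau<1\}}]$, replaces $D$ by the bounded set $D\cap B_R(y)$ (so $\tau$ becomes $\tau_R$ and $X_{\tau_R}$ ranges over a compact), and then applies an ad hoc extension of Varadhan's Lemma (Proposition~\ref{propF}) to the discontinuous functional $\gamma\mapsto \tfrac1{2(1-\tau_R(\gamma))}d(\gamma_{\tau_R},y)^2$. That extension in turn rests on Lemma~\ref{lemlem}, which builds, for each $\gamma$ with $I(\gamma)<\infty$ and $\tau(\gamma)\le 1$, a nearby continuity point of $\tau$ by pushing the path slightly along the outward normal to $\partial D$. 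Finally a separate argument shows that the truncated infimum $M_R$ equals the untruncated $M$ for $R$ large.

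Your $\delta$-splitting sidesteps all of this extra machinery: the ``bulk'' term $\{\tau\le 1-\delta\}$ is a closed $\cl F_{1-\delta}$-measurable event, so Theorem~\ref{main} applies directly with no continuity issues for $\tau$; the ``tail'' term is killed by the elementary observation that returning from $\partial D$ to $y$ within time $\delta$ costs at least $d_0^2/(2\delta)$, made rigorous by the uniform Varadhan estimate on compacts plus Proposition~\ref{maximum} for the far part of $\partial D$. The trade-off is that the paper's Proposition~\ref{propF} is a reusable tool (a Varadhan upper bound for functionals that are merely continuous on a dense-enough set), whereas your argument is tailored to this specific exit problem. For the purpose of proving Proposition~\ref{upper} alone, your route is shorter and avoids Lemma~\ref{lemlem} and Proposition~\ref{propF} entirely.
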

\begin{proof}
Recalling that $\frac {d\widehat\P^t_{x,y}}{d\P^t_x}_{|_{\cl
F_{s}}}\!\!\!\!=\frac{p(t(1-s),X_{s},y)}{p(t,x,y)}$, which is a $( \F_s )_{0\le s <1}$- martingale and conditioning w.r.t. $\F_{\tau \wedge s}$, we have
$$\displaylines{
\widehat \P^t_{x,y} ( \tau < s ) =
\frac{1}{p(t,x,y)} \E^t_{x} \bigl[ p(t (1-s), X_{s}, y) 1_{\{\tau < s\}}     \bigr] =\cr
=\frac1{p(t,x,y)}\, \E^t_{x}\bigl[1_{\{\tau < s\}}    \E^t_{x}   \bigl[ p(t (1-s), X_{s}, y)\, |\, \F_{\tau \wedge s}  \bigr]  \bigr]=\cr
= \frac1{p(t,x,y)}\, \E^t_{x}\bigl[1_{\{\tau < s\}}     p(t (1- \tau\wedge s), X_{\tau \wedge s}, y) \bigr]= \cr
 = \frac{1}{p(t,x,y)}\E^t_{x}\bigl[p(t(1-\tau), X_{\tau}, y)1_{\{\tau < s\}}      \bigr]
}$$
and, taking the limit as $s \uparrow 1$ with Beppo Levi's theorem,
$$
 \widehat \P^t_{x,y} ( \tau < 1)  =
\frac{1}{p(t,x,y)}\E^t_{x}\bigl[p(t(1-\tau), X_{\tau}, y)1_{\{\tau < 1\}}      \bigr]\ ,
$$
so that by \paref{ve}
\begin{equation}
\limsup_{t\to 0} t\log \widehat \P^t_{x,y} ( \tau < 1)
\le\frac{1}{2}d(x,y)^2 + \limsup_{t\to 0} t\log \E^t_{x}\bigl[p(t(1-\tau), X_{\tau}, y)1_{\{\tau < 1\}}      \bigr]\ .
\end{equation}
Our aim now is to apply Varadhan's lemma to the $\limsup$ at the right-hand side and to apply Varadhan's estimate \paref{ve} to the term $p(t(1-\tau), X_{\tau}, y)$. The difficulty coming from the lack of continuity of the functional $\tau$ is overcome in the next Proposition \ref{propF} but, in order to apply \paref{ve} we must reduce the problem to the case of a bounded set $D$. Let $B_R(y)\subset S$ be the open ball of radius $R>0$ centered at $y$
with respect to the metric $d$ and let $\tau_R$ be the exit time from $D\cap B_R(y)$.
$\tau_R$ is a lower semi-continuous functional on $\cl C_x$, $D\cap B_R(y)$ being an open set.
Of course $\tau_R \le \tau$ and therefore
\begin{equation}\label{majoR}
\widehat \P^t_{x,y} ( \tau < 1) \le  \widehat \P^t_{x,y} ( \tau_R < 1)\ .
\end{equation}
We have
$$\displaylines{
\limsup_{t\to 0} t\log \widehat \P^t_{x,y} ( \tau_R < 1)=\frac{1}{2}d(x,y)^2 +
\limsup_{t\to 0} t\log \E^t_{x}\bigl[p(t(1-\tau_R), X_{\tau_R}, y)1_{\{\tau_R < 1\}}\bigr]\ .
}$$
Thanks to \paref{ve}, $\overline {D\cap B_R(y)}$ being compact, we have for $t$ small
$$
\E^t_{x}\bigl[p(t(1-\tau_R), X_{\tau_R}, y)1_{\{\tau_R < 1\}}\bigr]\le
\E^t_{x}\bigl[\e^{-\frac{1}{t}\left(\frac 1{2(1-\tau_R)}\,d(X_{\tau_R},y)^2 - \ep\right)}
1_{\{\tau_R \le  1\}}\bigr]\ .
$$
Proposition \ref{propF} below states that the upper bound of Varadhan's lemma also holds for the functional inside the expectation at the right-hand side above. Hence applying Proposition \ref{propF} to the functional $F(\gamma)=
\frac 1{2(1-\tau_R)}\,d(\gamma_{\tau_R},y)^2 - \ep$
we obtain
$$
\lim_{t\to 0} t\log \E^t_{x}\bigl[p(t(1-\tau_R), X_{\tau_R}, y)1_{\{\tau_R \le 1\}}\bigr]\le
-\underbrace{\inf_{\gamma,\tau_R(\gamma)\le 1}
\Big(I(\gamma)+\frac 1{2(1-\tau_R)}\,d(\gamma_{\tau_R},y)^2 \Big)}_{:= M_R} + \ep\ .
$$
Let
$$
M:=\inf_{\tau(\gamma)\le 1}
\Big(I(\gamma)+\frac 1{2(1-\tau)}\,d(\gamma_{\tau},y)^2 \Big)\ ,
$$
and let us prove that $R$ can be chosen large enough so that $M_R= M$.
For every $\ep >0$, there exists a path $\gamma_\ep$, such that $\tau(\gamma_\ep)\le 1$ and
$$
I(\gamma_\ep)+\frac 1{2(1-\tau(\gamma_\ep))}\,d(\gamma_\ep({\tau}),y)^2 < M + \ep\ .
$$
Of course $\gamma_\ep([0,\tau_{R_\ep}])\subset B_{R_\ep}(y)$ for some $R_\ep>0$.
Let us consider $M_R$ for $R\ge R_\ep$.
Fix $\gamma$ such that $\tau_R(\gamma)\le 1$. If $\gamma(\tau_R)=\gamma(\tau) \in \partial D$, then
$$
M \le
I(\gamma)+\frac 1{2(1-\tau_R(\gamma))}\,d(\gamma(\tau_R),y)^2\ .
$$
Conversely, if $\gamma(\tau_R) \notin \partial D$, then necessarily $\gamma(\tau_R) \in \partial B_R(y)$ so that
$$
I(\gamma)+\frac 1{2(1-\tau_R(\gamma))}\,d(\gamma(\tau_R),y)^2 \ge\frac{1}2\, d(\pa B_R(y),y)^2=\frac{1}2\, R^2\ .
$$
Putting things together we find
$$\displaylines{
M_R \ge \inf \Bigl(\inf_{\gamma_{\tau_R}\in \partial D} I(\gamma)+\frac 1{2(1-\tau_R(\gamma))}
\,d(\gamma_{\tau_R},y)^2,   \frac{1}2 R^2\Bigr)\ge
\inf \Bigl(M, \frac{1}2 R^2\Bigr)\ .
}$$
For $R$ large enough we have therefore
$$
M_R = M\ ,
$$
and
$$
\lim_{t\to 0} t\log \E^t_{x}\bigl[p(t(1-\tau_R), X_{\tau_R}, y)1_{\{\tau_R \le 1\}}\bigr]\le - M + \ep\ .
$$
which together with \paref{majoR} allows to conclude.
\end{proof}
\finedim
\begin{lem}\label{lemlem}
Let $D\subset\RR^m$ be an open set having a $C^1$ boundary.
Let us denote by $\tau:\cl C_x\to\R^+\cup\{+\infty\}$ the
exit time from $D$. Let $\ep>0$, $\rho>0$. Then for every $\gamma\in \{I\le \rho\}\cap \{\tau\le1\}$
there exists $\widetilde \gamma\in \{I<+\infty\}\cap \{\tau\le1\}$ such that
$\Vert\gamma-\widetilde \gamma\Vert_\infty<\ep$ and which is a continuity point for $\tau$.
\end{lem}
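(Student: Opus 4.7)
The strategy is to modify $\gamma$ on a small time window bracketing its exit time $\tau_0:=\tau(\gamma)\in(0,1]$, replacing the piece inside by a piecewise linear excursion that punches transversally through $\partial D$ into $\mathrm{int}(D^c)$. Setting $z_0:=\gamma(\tau_0)\in\partial D$, I use that $\partial D$ is $C^1$ to pick a $C^1$ local defining function $f$ for $D$ on a neighbourhood $U$ of $z_0$ (so $D\cap U=\{f<0\}$, $\nabla f(z_0)\ne 0$) and let $n:=\nabla f(z_0)/|\nabla f(z_0)|$ be the outward unit normal, so that $w:=z_0+\eta n\in\mathrm{int}(D^c)$ for all sufficiently small $\eta>0$. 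If $\tau_0<1$, I pick $\delta\in(0,\min(\tau_0,1-\tau_0))$ small, set $s_1=\tau_0-\delta$, $s_2=\tau_0+\delta$, let $\widetilde\gamma$ coincide with $\gamma$ on $[0,s_1]\cup[s_2,1]$, and on $[s_1,s_2]$ define $\widetilde\gamma$ to be the piecewise linear path $\gamma(s_1)\to w\to\gamma(s_2)$. If instead $\tau_0=1$, I use the one-sided window $s_1=1-\delta$, $s_2=1$ with the single linear segment $\gamma(s_1)\to w$ on $[s_1,1]$.

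The properties $\|\widetilde\gamma-\gamma\|_\infty<\ep$, $I(\widetilde\gamma)<+\infty$ and $\tau(\widetilde\gamma)\le 1$ follow by routine estimates. From $I(\gamma)\le\rho$ together with the ellipticity and local Lipschitz continuity of $a$, $\gamma$ is uniformly $1/2$-H\"older on compact sets, so $|\gamma(s)-z_0|\le C\sqrt{\rho\,\delta}$ on $[s_1,s_2]$; combined with $|\widetilde\gamma(s)-z_0|\le\max(\eta,C\sqrt{\rho\,\delta})$ on the same interval, the sup-norm distance is made $<\ep$ by choosing $\delta,\eta$ small. The finite action of $\widetilde\gamma$ is immediate, since the detour is piecewise linear with slopes of order $(\eta+\sqrt{\rho\delta})/\delta$, and $\tau(\widetilde\gamma)\le 1$ because $\widetilde\gamma$ reaches $w\in\mathrm{int}(D^c)$ by time $\tau_0\le 1$ (resp.\ by time $1$).

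The heart of the argument is the continuity of $\tau$ at $\widetilde\gamma$. The detour segment from $\gamma(s_1)\in D$ to $w$ has direction $v=w-\gamma(s_1)=\eta n+(z_0-\gamma(s_1))$; since $|z_0-\gamma(s_1)|\le C\sqrt{\rho\,\delta}$, taking for instance $\eta:=2C\sqrt{\rho\,\delta}$ (still $\to 0$) gives $\langle v,n\rangle\ge C\sqrt{\rho\,\delta}$, which together with continuity of $\nabla f$ and $\delta$ small yields $\langle v,\nabla f(\zeta)\rangle>0$ for every $\zeta$ on the detour inside $U$. Hence $f\circ\widetilde\gamma$ is strictly increasing on $[s_1,\tau_0]$ and vanishes at a unique time $\tau_1\in(s_1,\tau_0)$ with $f(\widetilde\gamma(\tau_1+h))\ge c\,h$ for small $h>0$; in particular one can find $s_*>\tau_1$ with $\widetilde\gamma(s_*)$ at uniformly positive distance from $\partial D$, so that any path $\gamma'$ sufficiently close to $\widetilde\gamma$ in sup norm satisfies $\gamma'(s_*)\in D^c$ and hence $\tau(\gamma')\le s_*$. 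Combined with the automatic lower semi-continuity of $\tau$ (which follows from continuity of paths and closedness of $D^c$), this yields $\tau(\gamma')\to\tau(\widetilde\gamma)$ whenever $\gamma'\to\widetilde\gamma$, i.e.\ continuity of $\tau$ at $\widetilde\gamma$. The delicate point is precisely the joint choice of $\delta$ and $\eta$: $\delta$ must be small enough to keep the detour inside the chart $U$ and to meet the sup-norm target, while $\eta/\sqrt{\delta}$ must remain bounded below so as to force the transversal crossing; the edge case $\tau_0=1$ is dealt with by the one-sided window, which ensures the crossing time $\tau_1$ still lies strictly in $(0,1)$.
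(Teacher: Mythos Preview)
Your argument is correct and follows the same underlying idea as the paper: near the exit point $z_0=\gamma(\tau_0)\in\partial D$, perturb $\gamma$ so that the modified path crosses $\partial D$ transversally along (essentially) the outward normal, which forces $\tau$ and $\tau_{\overline D}$ to coincide at $\widetilde\gamma$ and hence makes $\widetilde\gamma$ a continuity point of $\tau$.

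The construction, however, differs from the paper's. For $\tau_0<1$ the paper keeps $\widetilde\gamma=\gamma$ up to time $\tau_0$, then runs along the outward normal $v$ for a time $\delta$, retraces back along $-v$ for another $\delta$, and finally follows $\gamma$ with a $2\delta$ delay; thus $\tau(\widetilde\gamma)=\tau_0$ exactly, and continuity is immediate from the one-line observation $\tau(\widetilde\gamma)=\tau_{\overline D}(\widetilde\gamma)$. You instead excise the window $[\tau_0-\delta,\tau_0+\delta]$ and splice in the piecewise linear path $\gamma(s_1)\to w\to\gamma(s_2)$, which makes the new exit time $\tau_1$ strictly smaller than $\tau_0$ and forces you to argue transversality via the joint tuning $\eta\asymp\sqrt{\delta}$ and the monotonicity of $f\circ\widetilde\gamma$. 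Both reach the same conclusion; the paper's surgery avoids the coupling of $\eta$ and $\delta$ and the explicit analysis of the crossing, at the cost of slightly more bookkeeping in estimating $\|\gamma-\widetilde\gamma\|_\infty$ through the time-shift. In the boundary case $\tau_0=1$ the paper accelerates $\gamma$ on $[0,1-\delta]$ and uses the spare time to move along the normal, whereas you simply replace the last piece by a linear segment to $w$; again both are valid, and your version is arguably simpler here. One small point worth making explicit in your write-up is that the splice stays inside $S$ (so that $a^{-1}$ remains bounded on the detour), which follows from the sup-norm closeness and the fact that $\gamma([0,1])$ is a compact subset of the open set $S$.
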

\begin{proof}
It is easy to check that $\tau$ is a lower semicontinuous functional on $\cl C_x$,
whereas $\tau_{\overline D}$, the exit time from the closure $\overline D$ is upper
semicontinuous. $\tau$ is therefore continuous at $\gamma$ if and only if
$\tau(\gamma)=\tau_{\overline D}(\gamma)$.

Let us first assume that $\gamma\in \{I\le \rho\}\cap \{\tau<1\}$ (i.e. that it reaches $\pa D$ strictly before time $1$) and let $v$ a unitary vector that is
orthogonal to $\pa D$ at $\gamma(\tau)$ and pointing outward of $D$.

Let $\delta>0$. We define $\widetilde \gamma$ in the following way: $\widetilde \gamma$ is equal to
$\gamma$ until time $\tau(\gamma)$, then follows the direction $v$ at speed $1$ during a time
$\delta$, then goes back following the direction $-v$ during a time $\delta$. At this point
$\widetilde \gamma$ is at the position $\gamma(\tau)$. From now on it will follow the path $\gamma$ delayed by time $2\delta$. To be formal
$$
\widetilde \gamma(t)=\begin{cases}
\gamma(t)&\mbox{if }t\le \tau(\gamma)\cr
\gamma(\tau)+(t-\tau(\gamma))v&\mbox{if }\tau(\gamma)\le t\le \tau(\gamma)+\delta\cr
\gamma(\tau)+\delta v-(t-\tau(\gamma)-\delta)v&\mbox{if }\tau(\gamma)+\delta\le t\le \tau(\gamma)+2\delta\cr
\gamma(t-2\delta)&\mbox{if }t\ge \tau(\gamma)+2\delta\ .\cr
\end{cases}
$$
It is clear that, for every $\delta>0$, $\tau(\widetilde \gamma)=\tau_{\overline D}(\widetilde \gamma)$, as following the normal $v$ $\widetilde \gamma$ exits of $\overline D$ immediately ``after'' time $\tau$. Thus $\widetilde \gamma$ is a continuity point of $\tau$.
Moreover, exploiting the fact that $\gamma$ is H\"older continuous with some constant $K$,
it is easy to see that
$$
\Vert\gamma-\widetilde \gamma\Vert_\infty\le \delta+K\sqrt{2\delta}
$$
and this, for every $\ep>0$, can be made $<\ep$, $\delta$ being arbitrary.

If $\gamma\in \{I\le \rho\}\cap \{\tau=1\}$ the construction above does not apply but we can define, for $\delta>0$, $\widetilde\gamma_t=\gamma_{\frac{t}{1-\delta}}$ for $0\leq t\leq 1-\delta$ and $\widetilde\gamma_t=\gamma_1+(t-(1-\delta))v$ for $1-\delta\leq t\leq 1$ (i.e. $\widetilde\gamma$ is slightly accelerated with respect to $\gamma$ and uses the extra time in order to enter immediately into ${\overline D}^c$). It is easy to check that $\widetilde\gamma$ satisfies the requirements of the statement.
\end{proof}
\finedim
\begin{prop}\label{propF}
Let $F:\cl C_x\to\RR$ be a measurable function which is  bounded below ($F\ge -M$ for some
$M\ge 0$). 
Let us assume that for every
$\ep>0$, $\rho>0$ and for every $\gamma\in \{I\le \rho\}\cap \{\tau\le1\}$
there exists $\widetilde \gamma\in \{I<+\infty\}\cap \{\tau\le1\}$ such that
$\Vert\gamma-\widetilde \gamma\Vert_\infty<\ep$ and which is a continuity point of $F$. Then we have
\begin{equation}\label{ps-varadhan}
\limsup_{t\to+\infty}t\log\E^t_x\bigl[\e^{-F/t}1_{\{\tau\le1\}}\bigr]\le
-\inf_{\gamma,\tau(\gamma)\le 1}(F+I)\ .
\end{equation}
\end{prop}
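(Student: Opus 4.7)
The plan is to mimic the standard proof of Varadhan's lemma, with the density hypothesis compensating for the discontinuity of both $F$ and of $1_{\{\tau\le 1\}}$. A key observation is that $\tau$ is lower semi-continuous (see Lemma~\ref{lemlem}), so $\{\tau\le 1\}$ is closed and the LDP upper bound for $(\P^t_x)_t$ can be applied to any closed set intersected with $\{\tau\le 1\}$. Set $c:=\inf_{\tau(\gamma)\le 1}(F(\gamma)+I(\gamma))$; we may assume $c>-\infty$, otherwise the conclusion is vacuous.

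First I would truncate. Let $F_N:=F\wedge N$ with $N>c$; since $F\ge F_N$ one has $\e^{-F/t}\le \e^{-F_N/t}$, and using $I\ge 0$ together with $N>c$ a short check gives $\inf_{\tau\le 1}(F_N+I)=c$. Any continuity point of $F$ is a continuity point of $F_N$, so the density hypothesis passes to $F_N$, and it suffices to treat this bounded measurable function. Next, localize on a sublevel set: decompose the expectation as $J_1+J_2$ according to whether $I(X)\le\rho$ or $I(X)>\rho$. Since $F_N\ge -M$, the LDP upper bound on the closed set $\{I\ge\rho\}$ gives $\limsup_{t\to 0}t\log J_2\le M-\rho$, which is strictly less than $-c$ once $\rho$ is large. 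For $J_1$, the set $K_\rho:=\{I\le\rho\}\cap\{\tau\le 1\}$ is compact: $\{I\le\rho\}$ is compact by goodness of the rate function (Arzel\`a--Ascoli on the uniformly H\"older sublevel set) and $\{\tau\le 1\}$ is closed.

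By the density hypothesis, the continuity points of $F_N$ in $\{I<+\infty\}\cap\{\tau\le 1\}$ are dense in $K_\rho$. For each such continuity point $\widetilde\gamma$, pick a closed ball $B(\widetilde\gamma,r)$ small enough that $F_N\ge F_N(\widetilde\gamma)-\ep$ on it; these balls cover $K_\rho$, so I extract a finite subcover $B_1,\ldots,B_n$ with centers $\widetilde\gamma_1,\ldots,\widetilde\gamma_n$. Then $J_1\le\sum_{i=1}^n\E^t_x\bigl[\e^{-F_N/t}1_{B_i\cap\{\tau\le 1\}}(X)\bigr]$, and on $B_i$ the bound $\e^{-F_N/t}\le \e^{(-F_N(\widetilde\gamma_i)+\ep)/t}$ together with the LDP upper bound on the closed set $B_i\cap\{\tau\le 1\}$ yields
\begin{equation*}
\limsup_{t\to 0}t\log\E^t_x\bigl[\e^{-F_N/t}1_{B_i\cap\{\tau\le 1\}}(X)\bigr]\le -F_N(\widetilde\gamma_i)+\ep-\inf_{B_i\cap\{\tau\le 1\}}I.
\end{equation*}

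To close the estimate, take a near-minimizer $\gamma^*_i\in B_i\cap\{\tau\le 1\}$ of $I$. Since $\Vert\gamma^*_i-\widetilde\gamma_i\Vert_\infty\le r$, continuity of $F_N$ at $\widetilde\gamma_i$ gives $F_N(\gamma^*_i)\ge F_N(\widetilde\gamma_i)-\ep$; combining with $(F_N+I)(\gamma^*_i)\ge c$ (as $\tau(\gamma^*_i)\le 1$) yields $F_N(\widetilde\gamma_i)+\inf_{B_i\cap\{\tau\le 1\}}I\ge c-2\ep$. Thus each summand contributes $\le -c+O(\ep)$ to the $\limsup$, and the maximum rule $\limsup_{t\to 0}t\log(\sum_i a_i(t))=\max_i\limsup_{t\to 0}t\log a_i(t)$ for a finite sum of positive terms gives $\limsup_{t\to 0}t\log J_1\le -c+O(\ep)$. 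Letting $\ep\to 0$ finishes the argument. The main obstacle is the joint discontinuity of $F$ and of $1_{\{\tau\le 1\}}$, to which neither Varadhan's lemma nor its closed-set version applies directly; the density hypothesis of the proposition is crafted precisely for this situation, and the rest is a careful truncation and compact-cover discretization so that on each piece of the cover the continuity-point center controls $F_N$, while lower semi-continuity of $\tau$ handles the indicator.
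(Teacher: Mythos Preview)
Your argument follows the same compact-cover strategy as the paper's proof, and the truncation $F_N=F\wedge N$ together with the near-minimizer device for controlling $I$ on each ball are perfectly valid alternatives to the paper's direct use of the lower semicontinuity of $I$. However, there is a genuine gap in your treatment of $J_2$: the set $\{I\ge\rho\}$ is \emph{not} closed. Since $I$ is only lower semicontinuous, it is $\{I\le\rho\}$ that is closed, while $\{I<\rho\}$ need not be open (any path $\gamma$ with $I(\gamma)<\rho$ can be approximated in $\cl C_x$ by paths with arbitrarily large $I$, e.g.\ by adding small high-frequency oscillations). Consequently the LDP upper bound cannot be invoked on $\{I\ge\rho\}$ as you do, and the closure of $\{I>\rho\}$ may well contain paths with $I=0$, so passing to the closure does not help either.

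The repair is precisely what the paper does: cover $K_\rho=\{I\le\rho\}\cap\{\tau\le1\}$ by \emph{open} balls $S_{\gamma_i,\delta_i}$ centered at continuity points, and take the remainder to be $\Gamma:=\{\tau\le1\}\setminus\bigcup_i S_{\gamma_i,\delta_i}$. This set is closed (intersection of the closed set $\{\tau\le1\}$ with the closed complement of a finite union of open balls), and since the balls cover $K_\rho$, every $\gamma\in\Gamma$ has $\tau(\gamma)\le1$ but $\gamma\notin K_\rho$, hence $I(\gamma)>\rho$. Now the LDP upper bound on the closed set $\Gamma$ legitimately gives $\limsup_{t\to0}t\log\P^t_x(\Gamma)\le-\rho$, and the rest of your argument (bounding $F_N$ on each ball via continuity at the center, then applying the upper bound on the closure $\overline{S_{\gamma_i,\delta_i}}\cap\{\tau\le1\}$) goes through unchanged.
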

\begin{proof} Let us fix $\ep>0$.
Let us denote by $G$ the set of continuity points $\gamma$ of $F$ such that $I(\gamma)<+\infty$ and $\tau(\gamma) \le 1$.

For every $\gamma\in G$, let $\delta=\delta(\gamma)$ be such that if
$\Vert \zeta-\gamma\Vert_\infty<\delta$, then $|F(\zeta)-F(\gamma)|<\ep$ and
$I(\zeta)\ge I(\gamma)-\ep$.
Let us denote $S_{\gamma,\delta}$ the open ball centered at
$\gamma$ and with radius $\delta$ in the uniform
topology.

Let us fix $\rho>0$. By hypothesis
the family $(S_{\gamma,\delta(\gamma)})_{\gamma\in G}$ is an open cover
of $\{I\le \rho\}\cap \{ \tau\le 1 \}$ which is a compact set.
Let $(S_{\gamma_i,\delta(\gamma_i)})_{i=1,\dots,m}$
a finite subcover. Then
$$
\E^t_x\bigl[\e^{-F/t}1_{\{\tau\le1\}}\bigr]\le
\sum_{i=1}^m \int_{S_{\gamma_i,\delta(\gamma_i)}}\e^{-F/t}1_{\{\tau\le1\}}\, d\P^t_x+
\int_\Gamma \e^{-F/t}\, d\P^t_x
$$
where $\Gamma=\{I > \rho \}\cap\{\tau\le1\}\setminus\bigcup_{i}
S_{\gamma_i,\delta(\gamma_i)}$.
Therefore for $i=1,\dots m$
$$
t\log \int_{S_{\gamma_i,\delta(\gamma_i)}} \e^{-F/t}\, d\P^t_x
\le t\log \Bigl(\e^{-(F(\gamma_i)-\ep)/t} \P^t_x(S_{\gamma_i,\delta(\gamma_i)})\Bigr)
$$
so that there exists $t_0>0$ such that, for $t\le t_0$,
$$
t\log \int_{S_{\gamma_i,\delta(\gamma_i)}} \e^{-F/t}\, d\P^t_x\le -F(\gamma_i)+\ep-I(\gamma_i)+2\ep
$$
and such that
$$
t\log \int_\Gamma \e^{-F/t}\, d\P^t_x\le t\log \bigl(\e^{M/t}
\P^t_x(\Gamma)\bigr)\le M-\rho+\ep\ .
$$
Therefore, for $t\le t_0$,
$$
\dlines{
t\log \E^t_x\bigl[\e^{-F/t}1_{\{\tau\le1\}}\bigr]\le
 \max\bigl(-F(\gamma_i)-I(\gamma_i)+3\ep,i=1,\dots,m,M-\rho\bigr)=\cr
=-\min
\bigl(F(\gamma_i)+I(\gamma_i)-3\ep,i=1,\dots,m,-M+\rho\bigr)\cr
}
$$
and $\rho,\ep$ being arbitrary, we find \paref{ps-varadhan}.
\end{proof}
\finedim

\begin{rema}\rm In \cite{bailleul} it is proved a LD estimate for the bridge of a hypoelliptic diffusion process. It is however required that the process takes its values on a compact manifold. Remark that Theorem \ref{main} remains true under this weaker assumption, just replacing the Riemannian distance with the associated sub-Riemannian distance (see \cite{bailleul} e.g. for details) and Varadhan's estimate \paref{ve} with its hypoelliptic counterpart (\cite{leandre-min}, \cite{leandre-maj}). As for the estimates of exit from a domain the main missing link is Lemma \ref{lemlem}: it is not obvious that given a point $x\in\pa D$ there exists a path of finite energy for which the exit time from $D$ and the one from $\overline D$ coincide.

Recall that a point $x\in\pa D$ is said to be {\it characteristic} if the subspace linearly generated by the random fields in the diffusion coefficient at $x$ is tangent to $\pa D$. We have that Lemma \ref{lemlem}, and therefore the upper bound, still holds in the hypoelliptic case if the boundary $\pa D$ does not have characteristic points and under this additional assumption also the argument concerning the lower bound remains valid. We choose not to address this issue as the applications that are the object of our interest are not in this direction.
\end{rema}
\section{Application: the Hull-White stochastic volatility model}\label{sec5}
As mentioned in \S \ref{intro}, the interest of estimates of the kind of Proposition \ref{upper} concerns mainly the application to simulation.
Let us consider the following model
\begin{equation}\label{HW-general}
\begin{array}{l}
\displaystyle d\xi_t=\Bigl(b-\frac {v_t^2}2\Bigr)\, dt+v_t\rho\, dB_1(t)+v_t\sqrt{1-\rho^2}\, dB_2(t)\\
\displaystyle dv_t=\mu v_t\, dt+\sigma v_t\, d B_1(t)\ ,
\end{array}
\end{equation}
where $-1\le \rho\le 1$, $\sigma>0$ and $B=(B_1,B_2)$ is a $2$-dimensional Brownian motion.
This is actually the Hull and White's model of stochastic volatility.
Remark that the process $(\xi,v)$ takes its values in $\RR\times\RR^+$.

We first investigate the simpler model with $\rho=0$ and $\sigma=1$, i.e.
\begin{equation}\label{HW-simple}
\begin{array}{l}
d\xi_t=\Bigl(b-\frac {v_t^2}2\Bigr)\, dt+v_t\, dB_1(t)\\
dv_t=\mu v_t\, dt+v_t\, d B_2(t)\ .
\end{array}
\end{equation}
The corresponding metric tensor is
\begin{equation}\label{metric a}
a^{-1}(\xi,v)=\begin{pmatrix} \frac 1{v^2}&0\cr 0&\frac 1{v^2}\end{pmatrix}
\end{equation}
and we recognize the metric structure of the Poincar\'e half-space $\bH$.
Recall that $\bH=\RR\times \RR^+$ with the Riemannian distance (see \cite{abate} e.g. or wikipedia)
\begin{equation}\label{HyperbolicDist}
d((x_1,y_1),(x_2,y_2))=\acosh\Bigl(1+\frac{(x_2-x_1)^2+(y_2-y_1)^2}{2y_1 y_2}\Bigr)\ ,
\end{equation}
whereas the geodesics are pieces of circles whose center lies on the $y=0$ axis.
Remark that distances become very large near the axis $y=0$,
actually
\begin{equation}\label{HyperbolicLim}
\lim_{y\to0}d((x_1,y_1),(x_2,y))=+\infty\ ,
\end{equation}
so that the boundary is at an infinite distance
from every point of $\bH$. It is easy to conclude that Assumption \ref{assum2} is satisfied and that Proposition \ref{upper} holds in this situation.
\medskip

\begin{figure}[h]
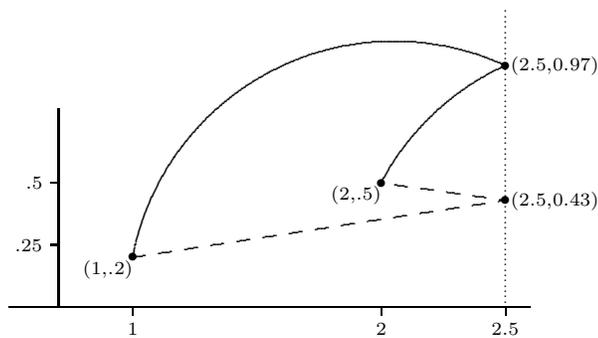

$$
\hbox{\hss
\beginpicture
\setcoordinatesystem units <1.3truein,1.3truein>
\setplotarea x from 0.5 to 2.6, y from 0 to .8
\axis bottom ticks short withvalues {$\scstyle 1$} {$\scstyle 2$} {$\scstyle 2.5$} / at 1 2 2.5 / /
\axis left shiftedto x=0.7 ticks short withvalues $\scstyle .25$ $\scstyle .5$ / at .25 .5 / /
\circulararc -103.93041 degrees from 1 .2  center at 2.0525 0
\circulararc -37.489499 degrees from 2 .5  center at 2.9475 0
\put {$\scstyle(1,.2)$} at .9 .15
\put {$\scstyle(2,.5)$} at 1.9 .45
\put {$\scstyle(2.5,0.97)$} [l] <2pt,0pt> at 2.5 0.9733961
\put {$\scstyle(2.5,0.43)$} [l] <2pt,0pt> at 2.5 0.43
\multiput {$\scstyle\bullet$} at 1 .2  2 0.5  2.5 0.9733961 2.5 0.43 /
\setdots <2pt>
\plot 2.5 0  2.5 1.2 /
\setdashes
\plot 1 .2  2.5 0.43  2 0.5 /
\endpicture\hss}
$$
\vglue2pt
\caption{Graph of the true minimizer (solid) in comparison with the one
provided by the approximation with the frozen coefficients (dashes). The dotted line denotes the barrier. Here ordinates are volatility whereas the logarithm of the price are in the abscissas. \label{HW1}}
\end{figure}
Figure \ref{HW1} shows the path minimizing the functional for the conditioned diffusion
with endpoints $(1,.2)$ and $(2,.5)$ in comparison with the one that would be given
by the method of freezing the coefficients (as the metric tensor is always a multiple of the identity, the minimizer does not depend on the point at which the coefficients are frozen, whereas the value of the rate function does). It appears that the two behaviors are quite different.
Not surprisingly also the values of the rate function produced by the two models are very different:
freezing the coefficients at $(2,.5)$ would give the value $6.0$, against the value with
the true model $3.8$ whereas freezing at $(1,.2)$ or at some point between would give an even larger result. Of course this discrepancy would be more large, the more the coordinate
$y$ of the volatility approaches $0$. Therefore freezing the coefficients would give for the probability $p_t$ of touching the barrier the equivalent as $t\to 0$ (in the logarithmic sense)
$$
p_t\approx \e^{-6/t}
$$
much different from the true equivalent $p_t\approx \e^{-3.8/t}$. Of course this discrepancy between the true equivalent and the one produced by the method of freezing will be important when the volatility is low and perhaps acceptable when it is high.

In Figure \ref{HW2} we considered endpoints nearer to the barrier, 
a situation more realistic during a simulation. The true value of the rate 
function turns out to be equal to $0.119$, whereas the coefficients frozen at $(2.48,.12)$ give the
approximation $0.083$. the discrepancy between the two values, assuming a time step 
$t=\frac 1{20}$ would give for the probability of exit the values
$$
\approx\e^{-.119\cdot 20}=0.092\qquad\mbox{and}\qquad\approx\e^{-.083\cdot 20}=0.19
$$
which is certainly not satisfactory. In this case freezing at the midpoint would give a
good approximation ($0.090$) but, as indicated by the situation of Figure \ref{HW1}, 
this is not a general rule.
\begin{figure}[h]
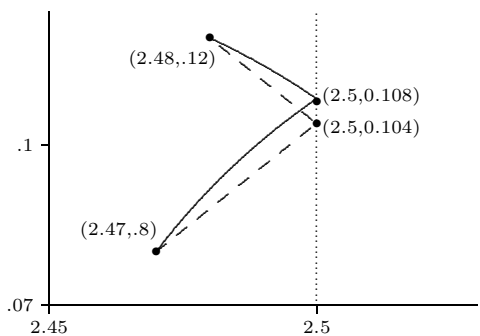

$$
\hbox{\hss
\beginpicture
\setcoordinatesystem units <28truein,28truein>
\setplotarea x from 2.45 to 2.53, y from 0.07 to .125
\axis bottom ticks short withvalues   {$\scstyle 2.45$} 
{$\scstyle 2.5$} / at  2.45 2.5 / /
\axis left shiftedto x=2.45 ticks short withvalues $\scstyle .07$ $\scstyle .1$ / at .07 .1 / /
\circulararc -18.073652 degrees from 2.47 .08  center at 2.575  0
\circulararc  -9.9988358 degrees from 2.48 .12  center at 2.425  0
\put {$\scstyle(2.47,.8)$} at 2.463 .084
\put {$\scstyle(2.48,.12)$} at 2.473 .116
\put {$\scstyle(2.5,0.108)$} [l] <2pt,0pt> at 2.5 0.109
\put {$\scstyle(2.5,0.104)$} [l] <2pt,0pt> at 2.5 0.103
\multiput {$\scstyle\bullet$} at 2.47 .08  2.48 .12  2.5 0.108 2.5 0.104 /
\setdots <2pt>
\plot 2.5 0.07  2.5 .125 /
\setdashes
\plot 2.47 .08  2.5 0.104  2.48 .12 /
\endpicture\hss}
$$
\vglue2pt
\caption{Same as in Figure \ref{HW1} but with values closer to the barrier.\label{HW2}}
\end{figure}

Here we computed numerically the  the point at which the path that minimizes the rate function reaches the boundary.

Let us go back to the general model \paref{HW-general}: the corresponding metric tensor is
\begin{equation}\label{metric tilde a}
\widetilde a^{-1}(\xi,v)=\frac{1}{v^2 \sigma^2 (1-\rho^2)}\begin{pmatrix} \sigma^2 & - \sigma \rho
\cr -\sigma \rho & 1\end{pmatrix}\ .
\end{equation}
Denoting, for simplicity, $\overline\rho=\sqrt{1-\rho^2}$, the linear transformation $\bH\to\bH$
\begin{equation}
A=\begin{pmatrix} \frac{1}{\overline\rho} & - \frac{\rho}{\sigma \overline\rho}
\cr 0 & \frac{1}{\sigma} \end{pmatrix}\
\end{equation}
turns the Riemannian metric \paref{metric tilde a}  on $\bH$  into the
Poincar\'e metric. It is then immediate that the Riemannian distance $\widetilde d$ associated to
\paref{metric tilde a}
is
\begin{equation}
\widetilde d((x_1,y_1) , (x_2,y_2))=
d((\overline\rho x_1 + \rho y_1, y_1) ,
(\overline\rho x_2 + \rho y_2, y_2))\ ,
\end{equation}
where $d$ is defined as in \paref{HyperbolicDist}. The geodesic for $d$ corresponding to
a piece of the circle centered in $(\alpha,0)$ with radius $R$ is changed into the geodesic
for $\widetilde d$, i.e. a
piece of the
 ellipse satisfying the equation
$$
(\overline\rho x + \rho y -\alpha)^2 + \sigma^2 y^2  = R^2\ .
$$
In practice in order to compute the equivalent of touching a barrier $x=x_0$ for the general Hull-White model \paref{HW-general} for the diffusion conditioned between the endpoints $z_1=(x_1,y_1)$ and $z_2=(x_2,y_2)$ the simplest way is to switch to the consideration of the simpler model \paref{HW-simple} conditioning between the endpoints $z_1'=Az_1$ and
$z_2'=Az_2$ and to compute, as indicated above, the equivalent of touching the straight line $x+\frac {\rho}{\overline\rho}\, y=\frac 1{\overline\rho}\, x_0$,which is the image of $x=x_0$ through $A$.

In this way it is possible to take advantage of the well known properties and
computations of the hyperbolic half plane $\bH$.

\def\cprime{$'$} \def\cprime{$'$}
\providecommand{\bysame}{\leavevmode\hbox to3em{\hrulefill}\thinspace}
\providecommand{\MR}{\relax\ifhmode\unskip\space\fi MR }
\providecommand{\MRhref}[2]{%
  \href{http://www.ams.org/mathscinet-getitem?mr=#1}{#2}
}
\providecommand{\href}[2]{#2}

\end{document}